\newcommand{\refpar}{Sect.~}
\newcommand{\lda}{\lambda}
\DeclareMathOperator{\supp}{supp}
\DeclareMathOperator{\Lip}{Lip}
\newcommand{\Wo}{{\raisebox{0.2ex}{\(\stackrel{\circ}{W}\)}}{}}
\newtheorem{theorem}{Theorem}
\newtheorem{lemma}{Lemma}
\newtheorem{proposition}{Proposition}
\newtheorem{corollary}{Corollary}
\theoremstyle{definition}
\newtheorem{definition}{Definition}
\newtheorem{remark}{Remark}
\newtheorem{example}{Example}
\newtheorem*{conjecture*}{Conjecture}
\newenvironment{enbibliography}{\vspace{-0.5cm}}
\DeclareMathAlphabet{\mathbbold}{U}{bbold}{m}{n}
\begin{document} 
\title{On spectral asymptotics of the Sturm-Liouville problem with self-conformal singular weight}
\author{U.~R.~Freiberg\footnote{Institut f\"{u}r Stochastik und Anwendungen, Universit\"{a}t Stuttgart, Pfaffenwaldring 57, D-70569 Stuttgart, Germany. E-mail: freiberg@mathematik.uni-stuttgart.de.}, N.~V.~Rastegaev\footnote{Chebyshev Laboratory, St. Petersburg State University, 14th Line 29b, 199178 Saint-Petersburg, Russia. E-mail: rastmusician@gmail.com. Supported by the joint SPbU and DFG grant No.~6.65.37.2017}}
\maketitle



\section{Introduction}

We consider spectral asymptotics for the Neumann problem
\begin{gather}
\left\{
\begin{split}\label{eq:1.1}
    &-y''=\lambda\mu y,\\ 
    &y'(0)=y'(1)=0,
\end{split}
\right.
\end{gather}
where the weight $\mu$ is a self-conformal measure on a line.

\begin{remark}
It is well known, that the change of the boundary conditions causes rank two perturbation of the quadratic form corresponding to the problem. It follows from the general variational theory 
(see \cite[\S10.3]{BS2}) that counting functions of the eigenvalues of boundary-value problems, related to the same equation, but different boundary conditions, cannot differ by more than 2.
\end{remark}
The problem of the eigenvalues asymptotic behavior for this problem goes back to the works of
M.~G.~Krein (see, for example, \cite{K}).

From \cite{BS1} it follows that if the measure $\mu$ contains absolutely
continuous component, its singular component does not influence
the main term of the spectral asymptotic.

In the case of singular measure $\mu$ it follows from early works by M.~G.~Krein, that  the counting function $N:(0,+\infty)\to\mathbb N$ of eigenvalues of the problem \eqref{eq:1.1} 
admits the estimate $o(\lda^{\frac{1}{2}})$ instead of the usual asymptotics $N(\lda)\sim C\lda^{\frac{1}{2}}$ in the case of measure containing a regular component. (see, e.g., \cite{KrKac} or \cite{McKeanRay}, and also \cite{B} for similar results for higher ever order operators and better lower bounds for eigenvalues for some special classes of measures).

The problem is comparatively well-studied in the case of a self-similar (self-affine) measure. Exact power exponent in the case of self-similar measure was obtained in \cite{F}. 
It is shown in \cite{SV} and \cite{KL} that the eigenvalues counting function of problem \eqref{eq:1.1} for the self-similar weight has the asymptotics
\begin{equation}\label{eq:mes_asymp}
	N(\lambda)=\lambda^D\cdot\bigl(s(\ln\lambda)+o(1)\bigr),
	\qquad \lambda\to+\infty,
\end{equation}
where $D\in(0,\frac{1}{2})$ and $s$ is a continuous $T$-periodic function, dependent on the choice of the weight $\mu$ (see also \cite{Naz} for similar asymptotics in the case of an arbitrary even order differential operator, and \cite{Uta} for similar results for problems containing two self-similar measures). A series of works \cite{VSh3, V, Rast} is dedicated to the fine properties of the function $s$ for incrementally generalized classes of self-similar measures.

The aim of this paper is to find the power exponent $D$ in the case of self-conformal measure with some special properties.

This paper has the following structure. \refpar{2} provides the necessary definitions of self-conformal measures, derives their properties and defines some restrictions. \refpar{3} introduces the formal boundary value problem and defines the spectrum under consideration. \refpar{4} gives the definition of the deformed self-similar measure establishes the spectral asymptotics for them and formulates the strong bounded distortion property, which is the main restriction on the self-conformal measures considered in this paper. \refpar{5} shows the connection between self-conformal measures with strong bounded distortion property and deformed self-similar measures, thus extending the spectral asymptotics to them.

We denote by $C$ different constants, the values of which are of no consequence.

\section{Self-conformal measures on a line}\label{par:2}
\noindent
Let $m\geq 2$.
\noindent We say $/ = (\varphi_1, \ldots, \varphi_m; \rho_1, \ldots, \rho_m)$ is a \textit{conformal iterated function system} on $[0, 1]$,
if:
\begin{enumerate}
\item $\varphi_i: [0,1] \to \varphi_i([0,1])$ is a $C^{1+\gamma}$ diffeomorphism for $\gamma>0$ and all $i=1,\ldots,m$.
\item $\varphi_i((0,1)) \subset (0,1)$ and $\varphi_i((0,1)) \cap \varphi_j((0,1)) = \varnothing$ for all $i, j = 1, \ldots, m$, $i \neq j$.
\item $0 < |\varphi'_i(x)| < 1$ for all $i=1, \ldots, m$ and all $x\in [0,1]$.
\item Positive numbers $\rho_i$ are such, that $\sum\limits_{i=1}^m \rho_i = 1$.
\end{enumerate}

\begin{remark}
Clearly, the conformity property is redundant on a line, but we use the same terminology as in multidimensional case for compatibility. For multidimensional definition on a smooth Riemannian manifold see \cite{Pat}. For more general definitions in a complete metric space, see \cite{H}.
\end{remark}


\noindent Without loss of generality we assume, that $\varphi_i$ are numbered in ascending order, i.e. $\varphi_i(x) \leq \varphi_{i+1}(y)$ for all $x, y \in [0,1]$, $i = 1,\ldots,m-1$. We define boolean values $e_i$ as follows:
\begin{equation*}
e_i =
\left\{
\begin{aligned}
&0, &&\quad \varphi_i(0) < \varphi_i(1), \\
&1, &&\quad \varphi_i(0) > \varphi_i(1).
\end{aligned}
\right.
\end{equation*}
As such, $e_i=1$ when $\varphi_i$ changes the orientation of the segment.

We define the operator $\mathcal{S}$ on the space $L_{\infty}[0,1]$ as follows:
\begin{equation*}
\mathcal{S}(f) = \sum\limits_{i=1}^m
\left(\chi_{\varphi_i([0,1])}(e_i + (-1)^{e_i}f\circ \varphi_i^{-1})+\chi_{\{x>\varphi_i(1-e_i)\}} \right)\rho_i.
\end{equation*}

\begin{lemma}\label{prop1}
$\mathcal{S}$ is a contraction mapping on $L_{\infty}[0,1]$.
\end{lemma}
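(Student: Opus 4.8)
The plan is to prove directly the Lipschitz bound
\[
\|\mathcal{S}(f)-\mathcal{S}(g)\|_{L_{\infty}[0,1]}\le q\,\|f-g\|_{L_{\infty}[0,1]},\qquad q:=\max_{1\le i\le m}\rho_i,
\]
for all $f,g\in L_{\infty}[0,1]$, and then observe that $q<1$. The first step is the computation of the difference $\mathcal{S}(f)-\mathcal{S}(g)$: the summands $e_i\rho_i$ and $\chi_{\{x>\varphi_i(1-e_i)\}}(x)\,\rho_i$ do not involve the argument function, so they cancel, and one is left with
\[
\mathcal{S}(f)(x)-\mathcal{S}(g)(x)=\sum_{i=1}^m\rho_i\,(-1)^{e_i}\,\chi_{\varphi_i([0,1])}(x)\,(f-g)(\varphi_i^{-1}(x)).
\]
Before this I would check that $\mathcal{S}$ indeed maps $L_{\infty}[0,1]$ into itself: since $|\varphi_i'|$ is continuous and strictly positive on the compact segment $[0,1]$, it is bounded below, hence $\varphi_i^{-1}\colon\varphi_i([0,1])\to[0,1]$ is Lipschitz, so $f\circ\varphi_i^{-1}$ is a well-defined element of $L_{\infty}$; together with the fact that the indicator terms are bounded by $1$ this also gives $\|\mathcal{S}(f)\|_{\infty}\le(1+\|f\|_{\infty})\sum_i\rho_i<\infty$.

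The main step is to use the separation condition~2. By continuity $\varphi_i([0,1])=\overline{\varphi_i((0,1))}\subseteq[0,1]$, and each $\varphi_i([0,1])$ is a nondegenerate closed interval with interior $\varphi_i((0,1))$. By condition~2 the open intervals $\varphi_i((0,1))$ are pairwise disjoint, so the closed intervals $\varphi_i([0,1])$ have pairwise disjoint interiors and can meet only at endpoints; consequently the set $\bigcup_{i}\varphi_i([0,1])\setminus\bigcup_i\varphi_i((0,1))$ is finite, hence of Lebesgue measure zero. For every $x$ outside this finite set, at most one of the indicators $\chi_{\varphi_i([0,1])}(x)$ equals $1$, so the displayed sum has at most one nonzero summand and
\[
|\mathcal{S}(f)(x)-\mathcal{S}(g)(x)|\le q\,\|f-g\|_{L_{\infty}[0,1]}.
\]
Taking the essential supremum over $x$ yields the claimed Lipschitz bound. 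Finally, since $m\ge2$, every $\rho_i>0$, and $\sum_i\rho_i=1$, we have $\rho_i=1-\sum_{j\ne i}\rho_j<1$ for each $i$, hence $q<1$ and $\mathcal{S}$ is a contraction.

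I do not expect a genuine difficulty here, but the one point that must not be overlooked is that the disjointness of the images is essential: estimating $\mathcal{S}(f)-\mathcal{S}(g)$ termwise by the triangle inequality would only give the factor $\sum_i\rho_i=1$, which is not a contraction. The strict contraction is gained precisely from the fact that the intervals $\varphi_i([0,1])$ cover each point of $[0,1]$ with multiplicity at most one almost everywhere, so only a single coefficient $\rho_i\le q$ enters the pointwise estimate.
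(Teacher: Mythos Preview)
Your argument is correct and follows exactly the same route as the paper: compute $\mathcal{S}(f)-\mathcal{S}(g)$, observe that the constant and indicator terms cancel, and use the (almost everywhere) disjointness of the images $\varphi_i([0,1])$ to obtain the Lipschitz constant $q=\max_i\rho_i<1$. Your write-up is in fact more careful than the paper's one-line proof, which simply asserts the bound $\|\mathcal{S}(f_1)-\mathcal{S}(f_2)\|_\infty=\max_i\rho_i\cdot\|f_1-f_2\|_\infty$ and notes $\max_i\rho_i<1$.
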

\begin{proof}
\begin{gather*}
\|\mathcal{S}(f_1) - \mathcal{S}(f_2)\|_\infty = \| \sum\limits_{i=1}^m
\left((f_1-f_2)\circ \varphi_i^{-1} \right)\chi_{\varphi_i([0,1])}\rho_i \|_\infty =
 \max_i\rho_i \cdot \| f_1-f_2 \|_\infty. 
\end{gather*}
We note, that $\max\limits_i\rho_i < 1$, which proves the lemma.
\end{proof}

Hence, by the Banach fixed-point theorem there exists a (unique) function $C\in L_{\infty}[0,1]$ such that $\mathcal{S}(C)=C$.
Function $C(t)$ could be found as the uniform limit of the sequence $\mathcal{S}^k(f)$ for \mbox{$f(t)\equiv t$}, which allows us to assume that it is continuous and monotone, and also $C(0)=0$, $C(1)=1$.
The derivative of the function $C(t)$ in the sense of distributions is a measure $\mu$ without atoms, invariant with respect to $/$ in the sense of Hutchinson (see \cite{H}), i.e. it satisfies the relation
\begin{equation*}
\mu(E) = \sum\limits_{i=1}^m \rho_i \cdot \mu(\varphi_i^{-1}(E))
\end{equation*}
for any measurable set $E$.

\begin{definition}
We call $\mu$ \textit{self-conformal measure} and denote it
\[
\mu := \mu(\varphi_1, \ldots, \varphi_m; \rho_1, \ldots, \rho_m).
\]
\end{definition}

\begin{remark}
For a fixed measure $\mu$ the choice of iterated function system is not unique. Also, the definition does not require the function system to be conformal. For example, we will define measures using $W^1_\infty$ diffeomorphisms later. However, we call measure $\mu$ self-conformal only when it is possible to choose appropriate $C^{1+\gamma}$ diffeomorpisms to define it. 
\end{remark}

\begin{lemma}
Let us define
\[
\Phi(E) := \bigcup\limits_{i=1}^m \varphi_i(E),
\]
and let $|\Phi^k([0,1])|\to 0$ as $k\to\infty$. Then measure $\mu$ is singular with respect to Lebesgue measure.
\end{lemma}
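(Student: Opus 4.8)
The strategy is to show that $\mu$ is concentrated on the attractor $A:=\bigcap_{k\ge1}\Phi^k([0,1])$ of the iterated function system, and that $A$ is Lebesgue-null precisely because $|\Phi^k([0,1])|\to 0$. Since $\mu$ is supported on $[0,1]$, the pair of sets $A$ and $\mathbb{R}\setminus A$ will then witness the mutual singularity of $\mu$ and Lebesgue measure.

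First I would iterate the Hutchinson identity. For a word $w=(i_1,\dots,i_k)$ set $\varphi_w:=\varphi_{i_1}\circ\dots\circ\varphi_{i_k}$ and $\rho_w:=\rho_{i_1}\cdots\rho_{i_k}$. An immediate induction on $k$ upgrades $\mu(E)=\sum_i\rho_i\,\mu(\varphi_i^{-1}(E))$ to
\[
\mu(E)=\sum_{|w|=k}\rho_w\,\mu\bigl(\varphi_w^{-1}(E)\bigr),\qquad k\ge1,
\]
valid for every Borel set $E$. Unwinding the definition of $\Phi$ gives $\Phi^k([0,1])=\bigcup_{|w|=k}\varphi_w([0,1])$, a finite union of compact sets, hence measurable, with $|A|\le|\Phi^k([0,1])|$ for every $k$ and therefore $|A|=0$. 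Now if $E\cap\Phi^k([0,1])=\varnothing$, then for each $w$ with $|w|=k$ the preimage $\varphi_w^{-1}(E)$ is empty, because $\varphi_w([0,1])\subset\Phi^k([0,1])$; hence the displayed identity gives $\mu(E)=0$. In particular $\mu\bigl([0,1]\setminus\Phi^k([0,1])\bigr)=0$ for every $k$; since $[0,1]\setminus A=\bigcup_k\bigl([0,1]\setminus\Phi^k([0,1])\bigr)$ is a countable union of $\mu$-null sets, $\mu\bigl([0,1]\setminus A\bigr)=0$, i.e.\ $\mu$ is concentrated on $A$. Combined with $|A|=0$ this proves that $\mu$ is singular with respect to Lebesgue measure.

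I do not expect a real obstacle: the argument is a direct consequence of the Hutchinson invariance together with elementary measure theory. The only points deserving a word of care are the bookkeeping identity $\Phi^k([0,1])=\bigcup_{|w|=k}\varphi_w([0,1])$, which follows by induction once one notes that $\varphi_i([0,1])\subset[0,1]$ (property~2 plus continuity), and the triviality that $\varphi_w^{-1}(E)=\varnothing$ whenever $E$ misses the range of $\varphi_w$. Alternatively one could observe that the sets $\Phi^k([0,1])$ decrease to $A$, that $\mu$ is a probability measure by the normalisation $C(0)=0$, $C(1)=1$, and apply continuity from above to obtain $\mu(A)=1$ directly; this yields the same conclusion.
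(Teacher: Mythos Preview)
Your argument is correct and follows essentially the same route as the paper: the paper's proof is the one-line observation that $\supp\mu\subset\Phi^k([0,1])$ for every $k$, hence $|\supp\mu|=0$, and your iteration of the Hutchinson identity is precisely a detailed unpacking of that containment. The only difference is that you phrase the conclusion in terms of the attractor $A=\bigcap_k\Phi^k([0,1])$ rather than the support, which amounts to the same thing.
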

\begin{proof}
It is obvious, that $\supp \mu \subset \Phi^k([0,1])$ for every $k$, thus $|\supp\mu| = 0$.
\end{proof}


\begin{lemma}\label{lemma3}
Let $\Lip \sum\limits_{i=1}^m |\varphi_i-\varphi_i(0)| < 1$. Then $|\Phi^k([0,1])|\to 0$ as $k\to\infty$.
\end{lemma}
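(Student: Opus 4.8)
The plan is to prove the quantitative statement $|\Phi(E)|\le L\,|E|$ for every finite union of intervals $E\subset[0,1]$, where $L:=\Lip\sum_{i=1}^m|\varphi_i-\varphi_i(0)|<1$, and then iterate. Set $g(x):=\sum_{i=1}^m|\varphi_i(x)-\varphi_i(0)|$. Since each $\varphi_i$ has a derivative of constant sign (item~3 of the definition), the map $x\mapsto|\varphi_i(x)-\varphi_i(0)|$ is nondecreasing on $[0,1]$ with value $0$ at $x=0$, and hence $g$ is nondecreasing. By hypothesis $g$ is Lipschitz with constant $L<1$, so $g(b)-g(a)\le L(b-a)$ for $0\le a\le b\le 1$.

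The key point is that this Lipschitz bound on $g$ controls $\Phi$ exactly on intervals. For $0\le a\le b\le 1$ and each $i$, monotonicity of $\varphi_i$ gives $|\varphi_i([a,b])|=|\varphi_i(b)-\varphi_i(a)|$, and checking the two cases $e_i=0$ and $e_i=1$ separately one sees that this equals $|\varphi_i(b)-\varphi_i(0)|-|\varphi_i(a)-\varphi_i(0)|$. Summing over $i$ yields
\[
\sum_{i=1}^m|\varphi_i([a,b])|=g(b)-g(a)\le L(b-a)=L\,|[a,b]|,
\]
whence, by subadditivity of Lebesgue measure and $\Phi([a,b])=\bigcup_{i=1}^m\varphi_i([a,b])$, we get $|\Phi([a,b])|\le L\,|[a,b]|$.

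Next I would extend this to finite unions of intervals: if $E=\bigsqcup_j I_j$ with the $I_j$ disjoint intervals, then each $\varphi_i(E)=\bigsqcup_j\varphi_i(I_j)$ is again a disjoint union (by injectivity of $\varphi_i$), so $|\Phi(E)|\le\sum_i|\varphi_i(E)|=\sum_j\sum_i|\varphi_i(I_j)|\le L\sum_j|I_j|=L\,|E|$. Since $\varphi_w([0,1])$ is an interval for every finite word $w$, the set $\Phi^k([0,1])=\bigcup_{|w|=k}\varphi_w([0,1])$ is a finite union of intervals, so by induction $|\Phi^k([0,1])|\le L^k\,|[0,1]|=L^k\to 0$ as $k\to\infty$. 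I do not anticipate a real obstacle; the only delicate point is the ``key identity'' above — one must handle the absolute values and the orientations $e_i$ carefully so that the telescoping against $g$ is an exact equality, not merely an inequality — and one should note that $\Phi^k([0,1])$ remains a finite union of intervals so that the interval-wise estimate applies without change.
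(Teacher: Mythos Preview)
Your proof is correct and follows essentially the same approach as the paper: establish $|\Phi(E)|\le L\,|E|$ via the Lipschitz bound on $\sum_i|\varphi_i-\varphi_i(0)|$ and then iterate to get $|\Phi^k([0,1])|\le L^k\to 0$. The paper's version is terser --- it only writes down the case $E=[0,a]$ and then asserts the inequality for every measurable $E$ --- whereas you spell out the identity $|\varphi_i([a,b])|=|\varphi_i(b)-\varphi_i(0)|-|\varphi_i(a)-\varphi_i(0)|$ for general subintervals, pass explicitly to finite unions of intervals, and note that $\Phi^k([0,1])$ stays in that class; this fills in exactly the step the paper skips.
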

\begin{proof}
From $\Lip \sum\limits_{i=1}^m |\varphi_i-\varphi_i(0)| = \alpha < 1$ it follows by the definition of $\Phi$, that 
\[
|\Phi([0,a])| = \Lip \sum\limits_{i=1}^m |\varphi_i(a)-\varphi_i(0)| \leq \alpha |[0,a]|,
\]
thus for every measurable set $E$
$$|\Phi(E)| \leq \alpha |E|,$$ and thus $|\Phi^k([0,1])| \leq \alpha^k\to 0$ as $k\to\infty$.
\end{proof}

\begin{corollary}
Denote $\alpha_i := \Lip\varphi_i = \|\varphi'_i\|_\infty$ and let $\sum\limits_{i=1}^m \alpha_i < 1$. Then measure $\mu$ is singular with respect to Lebesgue measure.
\end{corollary}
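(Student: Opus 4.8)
The plan is to deduce the corollary directly from Lemma~\ref{lemma3} together with the second-to-last lemma. The key observation is that $\Lip\sum_{i=1}^m|\varphi_i-\varphi_i(0)|$ is controlled by $\sum_{i=1}^m\Lip|\varphi_i-\varphi_i(0)|$, since the Lipschitz seminorm is subadditive, and each summand satisfies $\Lip|\varphi_i-\varphi_i(0)| = \Lip\varphi_i = \|\varphi_i'\|_\infty = \alpha_i$. Indeed, subtracting the constant $\varphi_i(0)$ does not change the Lipschitz constant, and taking absolute value cannot increase it (the map $t\mapsto|t|$ is $1$-Lipschitz). Hence $\Lip\sum_{i=1}^m|\varphi_i-\varphi_i(0)|\le\sum_{i=1}^m\alpha_i<1$.

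First I would spell out this chain of inequalities, being slightly careful that $|\varphi_i - \varphi_i(0)|$ is genuinely Lipschitz with the claimed constant: on $[0,1]$ the function $\varphi_i - \varphi_i(0)$ is $C^1$ (indeed $C^{1+\gamma}$) with derivative bounded in absolute value by $\alpha_i$, so it is $\alpha_i$-Lipschitz, and composing with the $1$-Lipschitz map $|\cdot|$ preserves this. Then the finite sum of $\alpha_i$-Lipschitz functions is $(\sum_i\alpha_i)$-Lipschitz. Under the hypothesis $\sum_i\alpha_i<1$ this gives exactly the hypothesis of Lemma~\ref{lemma3}, namely $\Lip\sum_{i=1}^m|\varphi_i-\varphi_i(0)|<1$.

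From Lemma~\ref{lemma3} we then obtain $|\Phi^k([0,1])|\to 0$ as $k\to\infty$, and the preceding lemma (the one asserting singularity of $\mu$ whenever $|\Phi^k([0,1])|\to0$) immediately yields that $\mu$ is singular with respect to Lebesgue measure. So the corollary follows by concatenating the two lemmas through this elementary Lipschitz estimate.

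I do not expect any real obstacle here: the corollary is essentially a convenient repackaging of Lemma~\ref{lemma3} with a more transparent, checkable hypothesis stated in terms of the sup-norms of the derivatives $\|\varphi_i'\|_\infty$. The only point requiring a line of justification is the inequality $\Lip\sum_i|\varphi_i-\varphi_i(0)|\le\sum_i\|\varphi_i'\|_\infty$, and that is routine. If anything, one should double-check that the absolute values inside the sum in Lemma~\ref{lemma3} are handled correctly — but since $|\cdot|$ is $1$-Lipschitz this causes no loss, and in fact on $[0,1]$ with $\varphi_i(0)$ subtracted each term is of constant sign near $0$ anyway.
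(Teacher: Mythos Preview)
Your proposal is correct and is precisely the intended deduction: the paper states this result as a corollary without proof, and the natural (and only reasonable) way to obtain it is exactly what you do --- bound $\Lip\sum_i|\varphi_i-\varphi_i(0)|$ by $\sum_i\|\varphi_i'\|_\infty$ via subadditivity and the $1$-Lipschitz property of $|\cdot|$, then invoke Lemma~\ref{lemma3} followed by the preceding singularity lemma. There is nothing to add or correct.
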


\noindent Hereafter we always assume, that 
\begin{equation}\label{LipBound}
\sum\limits_{i=1}^m \alpha_i = \sum\limits_{i=1}^m \|\varphi'_i\|_\infty < 1.
\end{equation}
In particular, $[0,1]\setminus \Phi([0,1])$ contains at least one interval.

\begin{remark}
If all diffeomorphisms $\varphi_i$ are linear functions, we call $\mu$ \textit{self-similar measure}. For self-similar measures Lemma \ref{lemma3} means, that if $\Phi([0,1]) \neq [0,1]$, then $\mu$ is singular with respect to Lebesgue measure.
More general ways to construct self-similar functions on a line are described in
\cite{Sh}.
\end{remark}

\section{Sturm-Liouville problem with self-similar weight}
We consider the formal boundary value problem
\begin{gather}
\left\{
\begin{split}\label{eq:2.01}
    &-y''=\lambda\mu y,\\ 
    &y'(0)=y'(1)=0.
\end{split}
\right.
\end{gather}
We call the function $y \in W_2^1[0,1]$ its generalized solution if it satisfies the integral equation
$$ \int\limits_0^1 y'\eta'\, dx = 
\lda\int_0^1 y\eta \;d\mu(x)  $$
for any $\eta \in W_2^1[0,1]$. Substituting functions \mbox{$\eta \in \Wo_2^1[0,1]$} into the integral equation, we establish that the derivative $y'$ is a primitive of a singular measure without atoms $\lda\mu y$, thus $y \in C^1[0,1]$.

We denote by $\lambda_n(\mu)$ the eigenvalues of the problem \eqref{eq:2.01} numbered in ascending order, and by
\[
N(\lambda, \mu) := \#\{n: \lambda_n(\mu) < \lambda  \}
\]
their counting function.

\section{Deformed self-similar measures}

Consider $S_i: [0,1] \to I_i$ --- a set of affine (linear) contractions of $[0,1]$ onto non-intersecting subsegments $I_i$ of $[0,1]$. Denote by
\[
\mu_0 := \mu_0(S_1, \ldots, S_m; \rho_1, \ldots, \rho_m)
\] 
the self-similar measure generated by them. Let $g: [0,1] \to [0,1]$ be a $W^1_\infty$ diffeomorphism. 
\begin{definition}
We define a \textit{deformed self-similar measure} $\mu$ as
\[
\mu(E) := \mu_0(g(E))
\]
for every measurable set $E$. 
\end{definition}

\begin{lemma}
Let $\mu = \mu_0 \circ g$ be a deformed self-similar measure and let $g$ be a $C^{1+\gamma}$ diffeomorphism for some $\gamma>0$.
Then $\mu$ is a self-conformal measure. 
\end{lemma}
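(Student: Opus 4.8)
My plan is to exhibit an explicit conformal iterated function system whose Hutchinson-invariant measure is $\mu$, obtained by conjugating the affine maps $S_i$ with the diffeomorphism $g$. Set $\varphi_i:=g^{-1}\circ S_i\circ g$. Several of the required properties are then immediate: since $g$ is a homeomorphism of $[0,1]$ it maps $(0,1)$ onto $(0,1)$, so $\varphi_i((0,1))=g^{-1}(S_i((0,1)))=g^{-1}(\mathrm{int}\,I_i)\subset(0,1)$, and these images are pairwise disjoint because the $\mathrm{int}\,I_i$ are; moreover $\sum_i\rho_i=1$ by hypothesis. For the regularity I would use that $g,g^{-1}\in C^{1+\gamma}$ --- for $g^{-1}$ via $(g^{-1})'=1/(g'\circ g^{-1})$ together with the facts that $g'$ is bounded away from $0$ and from $\infty$ on the compact interval and that the composition of a $C^{\gamma}$ map with a Lipschitz one (in either order) is again $C^{\gamma}$ --- and that $C^{1+\gamma}$ is preserved under composition (the $S_i$ being affine); hence $\varphi_i\in C^{1+\gamma}$, and it is a diffeomorphism onto its image as a composition of three such.

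The one property that is not automatic is the contraction estimate $0<|\varphi_i'|<1$. Differentiation gives $\varphi_i'(x)=\frac{g'(x)}{g'(\varphi_i(x))}\,S_i'$, whence $|\varphi_i'(x)|\le\kappa\,|S_i'|$ with $\kappa:=\bigl(\sup_{[0,1]}|g'|\bigr)\big/\bigl(\inf_{[0,1]}|g'|\bigr)<\infty$ (and $|\varphi_i'|>0$ trivially, all factors being nonzero); but $\kappa$ need not be at most $1$, so $|\varphi_i'|<1$ can genuinely fail. I would repair this by passing to an iterate of the underlying self-similar system. The measure $\mu_0$ is equally the self-similar measure of the system $(S_{\mathbf i};\rho_{\mathbf i})_{\mathbf i\in\{1,\dots,m\}^k}$, where $S_{\mathbf i}:=S_{i_1}\circ\cdots\circ S_{i_k}$ and $\rho_{\mathbf i}:=\rho_{i_1}\cdots\rho_{i_k}$: the images $S_{\mathbf i}([0,1])$ are again non-intersecting subsegments, $\sum_{\mathbf i}\rho_{\mathbf i}=(\sum_i\rho_i)^k=1$, and the Hutchinson operator of this iterate is the $k$-th power of the original one, so has the same fixed point $\mu_0$. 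Hence $\mu=\mu_0\circ g$ is a deformed self-similar measure for this iterate as well, and it suffices to work with it for $k$ large enough that $\kappa\,(\max_i|S_i'|)^k<1$. Writing $\varphi_{\mathbf i}:=g^{-1}\circ S_{\mathbf i}\circ g$, the same computation gives $|\varphi_{\mathbf i}'(x)|\le\kappa\,|S_{i_1}'|\cdots|S_{i_k}'|\le\kappa\,(\max_i|S_i'|)^k<1$, so $(\varphi_{\mathbf i};\rho_{\mathbf i})_{|\mathbf i|=k}$ is a conformal iterated function system in the sense of \refpar{2}. (If in addition $\sum_i|I_i|<1$, the same bound yields $\sum_{|\mathbf i|=k}\|\varphi_{\mathbf i}'\|_\infty\le\kappa\,(\sum_i|I_i|)^k<1$ for $k$ large, so the standing assumption \eqref{LipBound} is met too.)

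It remains to identify $\mu$ with the Hutchinson-invariant measure of this system. From $\varphi_{\mathbf i}=g^{-1}\circ S_{\mathbf i}\circ g$ one gets $g\circ\varphi_{\mathbf i}^{-1}=S_{\mathbf i}^{-1}\circ g$, so for every measurable $E$
\[
\sum_{\mathbf i}\rho_{\mathbf i}\,\mu\bigl(\varphi_{\mathbf i}^{-1}(E)\bigr)=\sum_{\mathbf i}\rho_{\mathbf i}\,\mu_0\bigl(g(\varphi_{\mathbf i}^{-1}(E))\bigr)=\sum_{\mathbf i}\rho_{\mathbf i}\,\mu_0\bigl(S_{\mathbf i}^{-1}(g(E))\bigr)=\mu_0\bigl(g(E)\bigr)=\mu(E),
\]
the third equality being the Hutchinson invariance of $\mu_0$ under the iterated system. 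Now $\mu$ is a non-atomic probability measure ($\mu_0$ is non-atomic and $g$ is a homeomorphism), and one checks directly that the distribution function $F$ of any non-atomic probability measure invariant under $(\varphi_{\mathbf i};\rho_{\mathbf i})$ satisfies the fixed-point equation $\mathcal S(F)=F$; by Lemma~\ref{prop1} this forces $F=C$, so $\mu$ coincides with the self-conformal measure $\mu(\varphi_{\mathbf i},\dots;\rho_{\mathbf i},\dots)$, i.e. $\mu$ is self-conformal. The only step requiring a genuine idea is the contraction condition~(3): plain conjugation does not secure it, and it is precisely to force it that one passes to a sufficiently deep iterate of the affine system; all the remaining verifications are routine.
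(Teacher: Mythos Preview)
Your core approach matches the paper's: set $\varphi_i:=g^{-1}\circ S_i\circ g$ and run the Hutchinson-invariance chain
\[
\mu(E)=\mu_0(g(E))=\sum_i\rho_i\,\mu_0\bigl(S_i^{-1}(g(E))\bigr)=\sum_i\rho_i\,\mu\bigl(\varphi_i^{-1}(E)\bigr).
\]
The paper's own proof stops right there, simply asserting that the $\varphi_i$ are $C^{1+\gamma}$ diffeomorphisms.

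You go further by noticing that the contraction condition $|\varphi_i'|<1$ in item~(3) of the definition is \emph{not} automatic from plain conjugation---your formula $\varphi_i'(x)=\dfrac{g'(x)}{g'(\varphi_i(x))}\,S_i'$ makes this transparent---and you repair it by passing to a sufficiently deep iterate of the affine system. This is a genuine point the paper glosses over, and your fix is clean and correct: the $k$-th iterate still has $\mu_0$ as its unique invariant measure, the conjugated maps $\varphi_{\mathbf i}$ satisfy $|\varphi_{\mathbf i}'|\le\kappa\,(\max_i|I_i|)^k<1$ for $k$ large, and the invariance and uniqueness arguments carry over verbatim. Your parenthetical remark that the standing assumption~\eqref{LipBound} can likewise be arranged (under $\sum_i|I_i|<1$) is also to the point. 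So your argument is both more careful and strictly more complete than the paper's; the paper presumably intends the reader to supply exactly this iteration, but does not say so.
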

\begin{proof}
It is clear, that
\[
\mu(E) = \mu_0(g(E)) = \sum\limits_{i=1}^m \rho_i \cdot \mu_0(S_i^{-1}(g(E)))=
\sum\limits_{i=1}^m \rho_i \cdot \mu\left((g^{-1}\circ S_i \circ g)^{-1}(E)\right),
\]
thus
\[
\mu = \mu(\varphi_1, \ldots, \varphi_m; \rho_1, \ldots, \rho_m),
\]
where $\varphi_i := g^{-1}\circ S_i \circ g$. It is clear, that $\varphi_i$ are $C^{1+\gamma}$ diffeomorphisms and the lemma is proved.

\end{proof}

\begin{proposition}\label{propFuj}
\textbf{\textsc{\cite[Theorem 3.6]{F}}}
\[
N(\lambda, \mu_0) \asymp \lambda^D,
\]
i.e. there exist constants $C_1, C_2 > 0$, such that for all $\lambda \geq 0$
\[
C_1 \lambda^D \leq N(\lambda, \mu_0) \leq C_2 \lambda^D,
\]
where $D \in (0, \frac{1}{2})$ is the only solution of
\[
\sum\limits_{i=1}^m (\rho_i|I_i|)^D = 1.
\]
\end{proposition}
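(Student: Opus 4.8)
The idea is to convert the self-similarity of $\mu_0$ into a renewal-type inequality for the counting function with bounded error, and then iterate it. Two preliminary observations. The spectrum of \eqref{eq:2.01} with weight $\mu_0$ is discrete and $N(\lambda,\mu_0)\to+\infty$ as $\lambda\to+\infty$, because the embedding $W_2^1[0,1]\hookrightarrow C[0,1]\hookrightarrow L_2([0,1],\mu_0)$ is compact and $\mu_0\not\equiv 0$. And $D\in(0,\tfrac12)$ is well defined: the function $t\mapsto\sum_{i=1}^m(\rho_i|I_i|)^t$ is continuous and strictly decreasing, equals $m\ge2$ at $t=0$, and at $t=\tfrac12$ does not exceed $\bigl(\sum_i\rho_i\bigr)^{1/2}\bigl(\sum_i|I_i|\bigr)^{1/2}<1$ by Cauchy--Schwarz and \eqref{LipBound}.

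The first step is a scaling identity. Writing the $i$-th affine map as $S_i(x)=\pm|I_i|x+c_i$ and substituting $x=S_i(t)$ in the integral equation defining generalized solutions, one checks that $-y''=\lambda\mu_0 y$ on $I_i$ with Neumann conditions at $\partial I_i$ becomes, after this unitary change of variables, exactly problem \eqref{eq:2.01} with the parameter $\rho_i|I_i|\lambda$ (the weight picks up the factor $\rho_i$ from self-similarity of $\mu_0$, the operator the factor $|I_i|^{-2}$, and $dx$ the factor $|I_i|$). Hence the Neumann counting function on $I_i$ equals $N(\rho_i|I_i|\lambda,\mu_0)$; by the rank-two perturbation argument of the Remark in the Introduction, the Dirichlet and Robin counting functions on $I_i$ differ from this by at most a fixed constant.

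The second step is Dirichlet--Neumann bracketing, with the cuts placed \emph{inside the gaps} of $\supp\mu_0$. Since $\sum_i|I_i|<1$, the complement $[0,1]\setminus\bigcup_i I_i$ has interior; choosing one point in each gap that separates or flanks the $I_i$ and cutting $[0,1]$ there produces $m$ pieces, the $i$-th containing $I_i$ surrounded by massless segments. On a massless segment $y''=0$, so $y$ is affine there; a massless segment with a free outer end forces $y'$ to vanish and thus transmits a Neumann condition to the adjacent end of $I_i$, so the Neumann problem on the $i$-th piece has the same eigenvalues as the Neumann problem on $I_i$. A massless segment with a Dirichlet outer end transmits instead a Robin condition, a rank-one perturbation per endpoint. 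Using that adding Neumann (resp.\ Dirichlet) cuts raises (resp.\ lowers) the counting function, we obtain, with $C_0:=2m$,
\[
\sum_{i=1}^m N(\rho_i|I_i|\lambda,\mu_0)-C_0\ \le\ N(\lambda,\mu_0)\ \le\ \sum_{i=1}^m N(\rho_i|I_i|\lambda,\mu_0).
\]

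The last step is iteration. For a word $w=(i_1,\dots,i_k)$ put $t_w:=\prod_l\rho_{i_l}|I_{i_l}|$ and $t_\varnothing:=1$; since $\sum_i t_i^D=1$, for each threshold $\theta>0$ the stopping set $\Lambda_\theta:=\{w:t_w\lambda<\theta\le t_{w^-}\lambda\}$ ($w^-$ denotes $w$ with its last letter removed) is a finite prefix-free set covering all infinite words, so $\sum_{w\in\Lambda_\theta}t_w^D=1$, and combined with $(\min_i t_i)\theta/\lambda\le t_w<\theta/\lambda$ this gives $|\Lambda_\theta|\asymp\lambda^D$, with constants depending on $\theta$. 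Unfolding the displayed inequality along the $m$-ary tree down to $\Lambda_\theta$, and noting that this tree has fewer than $|\Lambda_\theta|$ internal nodes, gives $\bigl|N(\lambda,\mu_0)-\sum_{w\in\Lambda_\theta}N(t_w\lambda,\mu_0)\bigr|\le C_0|\Lambda_\theta|$. For the upper bound pick $\theta$ below the second eigenvalue of \eqref{eq:2.01}, so every $N(t_w\lambda,\mu_0)\le1$ and $N(\lambda,\mu_0)\le(1+C_0)|\Lambda_\theta|\le C_2\lambda^D$; for the lower bound use $N(\cdot,\mu_0)\to+\infty$ to pick $\theta$ with $N((\min_i t_i)\theta,\mu_0)\ge2C_0$, so every summand is $\ge2C_0$ and $N(\lambda,\mu_0)\ge C_0|\Lambda_\theta|\ge C_1\lambda^D$ for $\lambda$ large; the remaining bounded range of $\lambda$ is routine from monotonicity. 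The main obstacle is the bracketing step: the cuts must go into the gaps and not at the endpoints of the $I_i$, so that the massless pieces stay attached to a genuine string and the error remains a fixed constant --- cutting at $\partial I_i$ would create massless intervals with two free ends (a degenerate problem), while a crude decomposition would produce an error growing with the number of intervals of each generation, which the iteration could not absorb.
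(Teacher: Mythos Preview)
The paper does not supply a proof of this proposition; it is quoted as \cite[Theorem~3.6]{F} and used as a black box. Your argument is a correct self-contained proof and in fact reproduces the standard route behind the cited result: the self-similarity of $\mu_0$ together with the affine change of variable $x=S_i(t)$ gives the scaling $N_{I_i}(\lambda)=N(\rho_i|I_i|\lambda,\mu_0)$; Dirichlet--Neumann bracketing over the pieces turns this into the renewal inequality $\bigl|N(\lambda,\mu_0)-\sum_i N(\rho_i|I_i|\lambda,\mu_0)\bigr|\le C_0$; and iterating down to a stopping set $\Lambda_\theta$ with $\sum_{w\in\Lambda_\theta}t_w^D=1$, hence $|\Lambda_\theta|\asymp\lambda^D$, extracts the exponent. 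Two small remarks. First, the paper allows adjacent images to touch ($U_j=\varnothing$), so some of your gaps may be empty; in that case cut at the common endpoint rather than ``inside the gap'' --- Neumann and Dirichlet conditions then land directly on $\partial I_i$, and the rank-one bookkeeping is unchanged. Second, the upper bound $N(\lambda,\mu_0)\le C_2\lambda^D$ cannot hold for all small $\lambda>0$ as literally stated, since the Neumann problem has eigenvalue $0$ and hence $N(\lambda,\mu_0)=1$ there while $\lambda^D\to0$; this is an imprecision in the proposition rather than in your argument, and the intended reading is $\lambda\to\infty$.
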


\begin{theorem}
Let $\mu$ be a deformed self-similar measure. Then
\[
N(\lambda, \mu) \asymp N(\lambda, \mu_0),
\]
i.e. there exist constants $C_1$, $C_2$, such that for all $\lambda \geq 0$
\[
C_1 N(\lambda, \mu_0) \leq N(\lambda, \mu) \leq C_2 N(\lambda, \mu_0).
\]
\end{theorem}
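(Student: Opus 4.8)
The plan is to transport the spectral problem for $\mu$ to the one for $\mu_0$ by the substitution $t=g(x)$, compare the two Rayleigh quotients up to multiplicative constants, and then read off the bound on counting functions from the variational principle together with Proposition~\ref{propFuj}.

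First I would record the variational (Glazman) description of the counting function: $N(\lambda,\mu)$ is the maximal dimension of a subspace $L\subset W_2^1[0,1]$ on which $\int_0^1 (y')^2\,dx < \lambda\int_0^1 y^2\,d\mu$ (and similarly for $\mu_0$), the Neumann conditions imposing no constraint on the form domain, which is all of $W_2^1[0,1]$. Next I would note that a $W^1_\infty$ diffeomorphism of $[0,1]$ is bi-Lipschitz, so $|g'|$ is bounded above and below by positive constants a.e.; hence $y\mapsto z:=y\circ g^{-1}$ is a linear automorphism of $W_2^1[0,1]$ carrying $n$-dimensional subspaces to $n$-dimensional subspaces. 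Then the change-of-variables formula, using that $\mu$ is the image of $\mu_0$ under $g^{-1}$ (a restatement of $\mu(E)=\mu_0(g(E))$) together with the substitution $t=g(x)$ in the Dirichlet integral, gives
\[
\int_0^1 y^2\,d\mu = \int_0^1 z^2\,d\mu_0,
\qquad
\int_0^1 (y')^2\,dx = \int_0^1 (z'(t))^2\,\bigl|g'(g^{-1}(t))\bigr|\,dt .
\]
The last integral lies between $\inf|g'|\cdot\int_0^1 (z')^2\,dt$ and $\sup|g'|\cdot\int_0^1 (z')^2\,dt$, so the inequality $\int (y')^2\,dx < \lambda\int y^2\,d\mu$ is equivalent, up to replacing $\lambda$ by a fixed constant multiple of $\lambda$, to $\int (z')^2\,dt < \lambda\int z^2\,d\mu_0$.

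Plugging this equivalence into the Glazman description yields two-sided bounds $N(c_1\lambda,\mu_0)\le N(\lambda,\mu)\le N(c_2\lambda,\mu_0)$ with $c_1,c_2>0$ depending only on $\|g'\|_\infty$ and $\|(g^{-1})'\|_\infty$. Since $N(\cdot,\mu_0)\asymp(\cdot)^D$ by Proposition~\ref{propFuj}, we get $N(c_j\lambda,\mu_0)\asymp\lambda^D\asymp N(\lambda,\mu_0)$, and the claimed $N(\lambda,\mu)\asymp N(\lambda,\mu_0)$ follows. (One could even avoid Proposition~\ref{propFuj} at this last step and merely use that $N(\cdot,\mu_0)$ is nondecreasing and of polynomial growth, so dilating the argument by a fixed factor changes it only by a bounded factor.)

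I expect the only care needed to be in the soft analytic points rather than in any estimate: that a $W^1_\infty$ diffeomorphism is genuinely bi-Lipschitz, so both $g'$ and $1/g'$ lie in $L_\infty$; that precomposition with it is a bounded isomorphism of $W_2^1[0,1]$ respecting the Neumann form domain (here trivial, as $g$ permutes $\{0,1\}$ and the form domain is the whole space); and the validity of the change-of-variables identity for the singular measure $\mu_0$. The singularity of $\mu_0$ otherwise plays no role — only the Lipschitz regularity of $g$ does — so the main "obstacle" is simply bookkeeping of the form domains and of the constants.
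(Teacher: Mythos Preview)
Your proposal is correct and follows essentially the same route as the paper: transport the quadratic forms by the bi-Lipschitz change of variable $t=g(x)$, use that the $\mu$- and $\mu_0$-integrals coincide while the Dirichlet integrals differ only by the bounded factor $|g'(g^{-1}(t))|$, apply the variational characterization of eigenvalues to get $N(c_1\lambda,\mu_0)\le N(\lambda,\mu)\le N(c_2\lambda,\mu_0)$, and then invoke Proposition~\ref{propFuj} to absorb the dilations. The only cosmetic difference is that the paper composes in the opposite direction (setting $z=y\circ g$ and comparing $\mu_0$-eigenvalues to $\mu$-eigenvalues via the min-max principle before passing to counting functions), which yields the same inequalities.
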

\begin{proof}
Consider $y \in W^1_2[0,1]$ and consider $z = y\circ g \in W^1_2[0,1]$. Note, that 
\begin{equation}\label{eq:4.1}
\int\limits_0^1 y^2(x) d\mu_0(x) = \int\limits_0^1 y^2(g(x)) d\mu(x)
= \int\limits_0^1 z^2(x) d\mu(x).
\end{equation}
Note, also, that by changing the variable $t:=g(x)$ we obtain
\begin{equation}\label{eq:4.1.1}
\int\limits_0^1 |z'(x)|^2 dx = 
\int\limits_0^1 |(y(g(x)))'|^2 dx = \int\limits_0^1 |y'(g(x))|^2 (g'(x))^2 dx = \int_0^1 |y'(t)| g'(g^{-1}(t)) dt.
\end{equation}
Since $g$ is  $W^1_\infty[0,1]$ diffeomorphism, there exist constants $q, Q > 0$, such that
\[
q < |g'(x)| < Q.
\] 
Hence, from \eqref{eq:4.1.1} it follows, that
\begin{equation}\label{eq:4.2}
q \int\limits_0^1 |y'(x)|^2 dx \leq \int\limits_0^1 |z'(x)|^2 dx \leq Q \int\limits_0^1 |y'(x)|^2 dx,
\end{equation}
thus, from \eqref{eq:4.1} and \eqref{eq:4.2}, using Courant–Fischer–Weyl min-max principle, we obtain 
\[
q \lambda_n(\mu_0) \leq \lambda_n(\mu) \leq Q \lambda_n(\mu_0),
\]
and 
\[
N(q\lambda, \mu_0) \leq N(\lambda, \mu) \leq N(Q\lambda, \mu_0).
\]
Note, that by the Proposition \ref{propFuj}
\[
N(q\lambda, \mu_0) \asymp N(\lambda, \mu_0), \quad 
N(Q\lambda, \mu_0) \asymp N(\lambda, \mu_0),
\]
and the theorem is proved.
\end{proof}

\begin{definition}
Let's introduce the following notations: $$\Sigma_k = \{1, \ldots, m\}^{k}, \quad \Sigma_* = \bigcup\limits_{i=0}^\infty \Sigma_i,$$ 
for a word $w=(i_1, i_2, \ldots, i_k)\in \Sigma_k$ we say $|w| = k$, denote 
\[
\varphi_w = \varphi_{i_1}\circ \varphi_{i_2} \circ \ldots \circ \varphi_{i_k}
\] 
and denote by $x_w$ the unique fixed point of $\varphi_w$: $$\varphi_w(x_w) = x_w.$$
We will also use notation $\varphi^{[k]}$ for the composition of $k$ instances of function $\varphi$.

\end{definition}

\begin{remark}
The conformal iterated function system fulfils the \textit{bounded distortion property} (see \cite[Lemma 2.1]{Pat}), i.e. there exists a constant $C\geq 1$, such that
\[
C^{-1} \leq \left| \dfrac{\varphi'_w(x)}{\varphi'_w(y)} \right| \leq C
\]
for all $x, y \in [0,1]$ and all $w \in \Sigma_*$.

For a deformed self-similar measure we have
\[
\varphi_w = g^{-1}\circ S_w \circ g,
\] 
for every word $w \in \Sigma_*$, thus
\begin{equation}\label{eq:4.3}
\varphi'_w(x) = \dfrac{g'(x)}{g'(\varphi_w(x))} |S_w([0,1])|
\end{equation}
for almost every $x \in [0,1]$. Note, that for $w = (i_1, i_2, \ldots, i_k) \in \Sigma_k$
\[
|S_w([0,1])| = |I_{i_1}|\cdot |I_{i_2}| \cdot \ldots \cdot |I_{i_k}|,
\]
does not depend on the order of elements of $w$, thus for $C=Q^2/q^2$ we have
\[
C^{-1} \leq \left| \dfrac{\varphi'_w(x)}{\varphi'_{\sigma w}(y)} \right| \leq C
\]
for all $x, y \in [0,1]$, all $w \in \Sigma_*$ and all permutations $\sigma \in \mathcal{S}_{|w|}$.
We call this \textit{strong bounded distortion property}. We are going to show in Sect.~5, that this property is sufficient to prove, that self-conformal measure is a deformed self-similar measure.
\end{remark}

\begin{remark}
It follows from \eqref{eq:4.3}, that if $g'$ exists at the point $x_i$, then
\[
|\varphi_i'(x_i)| = |I_i|.
\]
If $g'(x_i)$ is not defined, then
\[
|I_i| = \lim\limits_{k\to\infty} \sqrt[k]{\left|\varphi_i^{[k]}([0,1])\right|},
\]
so if $\varphi_i$ are $C^{1+\gamma}$ diffeomorphisms, then it is easy to see, that for all $x\in[0,1]$
\begin{equation}\label{eq:4.4}
|\varphi'_i(\varphi_i^{[k]}(x)) - \varphi'_i(x_i)| \leq C\alpha_i^{k\gamma},
\end{equation}
thus
\[
\lim\limits_{k\to\infty} \sqrt[k]{\left|\dfrac{\varphi_i^{[k]}([0,1])}{(\varphi'_i(x_i))^k}\right|} = 1,
\]
and, yet again, we have 
\[
|\varphi_i'(x_i)| = |I_i|.
\]
\end{remark}

\begin{corollary}
Let $\mu = \mu_0 \circ g$ be a deformed self-similar measure and let $g$ be a $C^{1}$ diffeomorphism. Then
\[
N(\lambda, \mu) \asymp \lambda^D,
\]
where $D \in (0, \frac{1}{2})$ is the only solution of
\[
\sum\limits_{i=1}^m (\rho_i|\varphi'_i(x_i)|)^D = 1.
\]
\end{corollary}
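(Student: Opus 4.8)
The plan is to read off the statement directly from the Theorem just proved together with Proposition~\ref{propFuj}, the only two things requiring a word of justification being that a $C^1$ diffeomorphism of $[0,1]$ is in particular a $W^1_\infty$ diffeomorphism (so that the Theorem applies) and that the exponent $D$ furnished by Fujita's result can be rewritten in terms of the fixed-point derivatives $\varphi_i'(x_i)$.

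First I would check the hypotheses of the Theorem. If $g$ is a $C^1$ diffeomorphism of $[0,1]$ onto itself, then $g'$ is continuous on the compact interval $[0,1]$, hence bounded; and since $g^{-1}$ is also $C^1$, the relation $g'(x)\,(g^{-1})'(g(x)) = 1$ shows that $g'$ never vanishes, so by continuity $g'$ is bounded away from $0$. Thus $g$ is a $W^1_\infty$ diffeomorphism in the sense used in the Theorem, and we obtain $N(\lambda,\mu) \asymp N(\lambda,\mu_0)$. Combining this with Proposition~\ref{propFuj}, which gives $N(\lambda,\mu_0) \asymp \lambda^D$ with $D \in (0,\tfrac12)$ the unique solution of $\sum_{i=1}^m (\rho_i|I_i|)^D = 1$, and using transitivity of $\asymp$, we get $N(\lambda,\mu) \asymp \lambda^D$ for this same $D$.

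It remains only to recast the defining equation for $D$. Here I would invoke the Remark preceding the corollary, based on formula~\eqref{eq:4.3}: since $g \in C^1$ the derivative $g'(x_i)$ exists, and evaluating \eqref{eq:4.3} at $x = x_i$, where $\varphi_i(x_i) = x_i$, gives $|\varphi_i'(x_i)| = |I_i|$ for every $i$. Hence $\sum_{i=1}^m (\rho_i|\varphi_i'(x_i)|)^D = \sum_{i=1}^m (\rho_i|I_i|)^D = 1$, so $D$ solves the stated equation as well; uniqueness of the solution and the inclusion $D \in (0,\tfrac12)$ are already part of Proposition~\ref{propFuj} since the two equations coincide (one may also note directly that $s\mapsto\sum_i(\rho_i|\varphi_i'(x_i)|)^s$ is continuous and strictly decreasing, equals $m\geq 2$ at $s=0$ and tends to $0$ at $+\infty$, because each base $\rho_i|\varphi_i'(x_i)| = \rho_i|I_i|$ lies in $(0,1)$). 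There is no genuinely hard step: the corollary is a repackaging of the Theorem, Proposition~\ref{propFuj} and the identity $|\varphi_i'(x_i)| = |I_i|$, and the only point needing care is the elementary observation that a $C^1$ diffeomorphism has derivative bounded away from zero, so that the Theorem is applicable.
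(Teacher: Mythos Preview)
Your argument is correct and matches the paper's intent: the corollary is stated there without proof, as an immediate consequence of the Theorem, Proposition~\ref{propFuj}, and the Remark establishing $|\varphi_i'(x_i)|=|I_i|$. Your only addition is the explicit verification that a $C^1$ diffeomorphism is a $W^1_\infty$ diffeomorphism, which is a sensible point to spell out.
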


\section{Deformation construction}

Consider a self-conformal measure
\[
\mu = \mu(\varphi_1, \ldots, \varphi_m; \rho_1, \ldots, \rho_m),
\]
that satisfies the relation \eqref{LipBound} and the strong bounded distortion property, and consider a self-similar measure
\[
\mu_0 := \mu_0(S_1, \ldots, S_m; \rho_1, \ldots, \rho_m),
\]
that has the same structure as $\mu$ ($S_i$ changes orientation if and only if $\varphi_i$ changes orientation; $S_i([0,1])$ and $S_{i+1}([0,1])$ touch if and only if $\varphi_i([0,1])$ and $\varphi_{i+1}([0,1])$ touch).

This section is concerned with two questions:
\begin{itemize}
\item If there exists a mapping $g$, such that $\mu = \mu_0 \circ g$.
\item If there exist $S_1$, $S_2$, \ldots, $S_m$, such that $g$ is a $W^1_\infty$-diffeomorphism.
\end{itemize}

\subsection{Construction}

For $i = 1,\ldots,m-1$ denote by $U_i$ the intermediate interval (possibly empty) between $\varphi_i([0,1])$ and $\varphi_{i+1}([0,1])$, i.e.
\[
U_i = (c_i, d_i) := \{ x\in[0,1] : \forall y\in[0,1] \;\; \varphi_i(y) < x < \varphi_{i+1}(y)  \}.
\]
It is clear, that 
\[
[0, 1] \setminus \supp \mu = \bigcup\limits_{i=1}^{m-1} \bigcup\limits_{w\in\Sigma_*} \varphi_w(U_i).
\]
Let's construct mapping $g$ explicitly by defining
\begin{equation}\label{eq:5.1.a}
\forall w \in \Sigma_* \qquad g(\varphi_w(0)) = S_w(0), \quad g(\varphi_w(1)) = S_w(1),
\end{equation}
connecting the dots linearly on each interval $\varphi_w(U_i)$, and thus everywhere outside $\supp \mu$,
and extending the definition continuously onto $\supp \mu$.

By this definition,
\[
\mu = \mu_0 \circ g,
\]
and $g'$ exists almost everywhere by Lebesgue measure.
\begin{lemma}
Denote
\[
\widetilde\varphi_i := g^{-1}\circ S_i \circ g.
\]
Then $\widetilde\varphi_i$ is a $W^1_\infty$ diffeomorphism, $\widetilde\varphi_i = \varphi_i$ on $\supp \mu$,
\[
\forall w\in\Sigma_* \quad \widetilde\varphi_w(0) = \varphi_w(0), \quad \widetilde\varphi_w(1) = \varphi_w(1), 
\quad \widetilde\varphi_w(x_w) = x_w,
\]
and
\[
\mu = \mu(\widetilde\varphi_1, \ldots, \widetilde\varphi_m; \rho_1, \ldots, \rho_m).
\]
\end{lemma}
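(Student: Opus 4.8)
The plan is to verify each of the listed assertions about $\widetilde\varphi_i := g^{-1}\circ S_i\circ g$ in turn, drawing on the explicit piecewise-linear construction of $g$ from \eqref{eq:5.1.a}. First I would establish the fixed-point and endpoint relations. The defining property $g(\varphi_w(0)) = S_w(0)$ and $g(\varphi_w(1)) = S_w(1)$ for all $w$, together with the fact that $S_i\circ S_w = S_{iw}$ and $\varphi_i\circ\varphi_w = \varphi_{iw}$, gives $\widetilde\varphi_i(\varphi_w(0)) = g^{-1}(S_i(S_w(0))) = g^{-1}(S_{iw}(0)) = \varphi_{iw}(0) = \varphi_i(\varphi_w(0))$, and similarly for the right endpoints; so $\widetilde\varphi_i$ agrees with $\varphi_i$ on the dense set $\{\varphi_w(0),\varphi_w(1): w\in\Sigma_*\}\subset\supp\mu$, hence on all of $\supp\mu$ by continuity. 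Iterating, $\widetilde\varphi_w = \varphi_w$ on $\supp\mu$ for every word $w$, which yields $\widetilde\varphi_w(0)=\varphi_w(0)$, $\widetilde\varphi_w(1)=\varphi_w(1)$, and (since $x_w\in\supp\mu$, being the intersection of the nested intervals $\varphi_w^{[k]}([0,1])$) $\widetilde\varphi_w(x_w)=\varphi_w(x_w)=x_w$.

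Next I would check that $\widetilde\varphi_i$ maps the complementary intervals correctly and hence is a genuine diffeomorphism of $[0,1]$. By construction $g$ maps each gap interval $\varphi_w(U_j)$ affinely onto $S_w(U_j')$ (the corresponding gap of $\mu_0$), and maps $\supp\mu$ onto $\supp\mu_0$. Since $S_i$ is affine and permutes the cylinder and gap structure of $\mu_0$ by sending $S_w(\cdot)$ to $S_{iw}(\cdot)$, the composition $g^{-1}\circ S_i\circ g$ sends $\varphi_w(U_j)$ affinely-composed-with-affine, i.e. affinely, onto $\varphi_{iw}(U_j)$, and sends $\supp\mu$ homeomorphically onto $\varphi_i(\supp\mu)$. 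Thus $\widetilde\varphi_i$ is a monotone homeomorphism onto $\varphi_i([0,1])$, piecewise affine on each gap with slopes uniformly bounded above and below — here I would invoke the bounds $q<|g'|<Q$ from the deformed-measure analysis (applied to $g$ and $g^{-1}$) to see that the slope of $\widetilde\varphi_i$ on a gap $\varphi_w(U_j)$ is $|S_i([0,1])|\cdot|g'|/|g'|$ up to the piecewise-constant structure, hence lies in a fixed interval $[c,C]\subset(0,\infty)$ — so $\widetilde\varphi_i\in W^1_\infty$ with $\widetilde\varphi_i'$ bounded away from $0$ and $\infty$, i.e. a $W^1_\infty$ diffeomorphism.

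Finally, the Hutchinson identity $\mu = \mu(\widetilde\varphi_1,\ldots,\widetilde\varphi_m;\rho_1,\ldots,\rho_m)$ follows exactly as in the earlier lemma identifying a deformed self-similar measure as self-conformal: $\mu(E) = \mu_0(g(E)) = \sum_i \rho_i\,\mu_0(S_i^{-1}(g(E))) = \sum_i \rho_i\,\mu_0\bigl(g((g^{-1}S_i g)^{-1}(E))\bigr) = \sum_i \rho_i\,\mu(\widetilde\varphi_i^{-1}(E))$; the only thing to note is that this computation is valid with $\widetilde\varphi_i = g^{-1}\circ S_i\circ g$ even though $g$ is merely $W^1_\infty$, because the identity $\mu=\mu_0\circ g$ is a set-function identity requiring no regularity of $g$ beyond being a bi-measurable bijection.

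I expect the main obstacle to be the second step: carefully arguing that the piecewise-affine map $g$ built by "connecting the dots" really extends to a homeomorphism of $[0,1]$ and that $\widetilde\varphi_i$ inherits two-sided bounds on its derivative. The subtlety is that $g$ is defined directly only on the (dense, full-measure-complement) union of gap intervals and their endpoints, and one must confirm that the continuous extension to $\supp\mu$ is well-defined, strictly monotone, and that the resulting $g^{-1}$ is Lipschitz — equivalently that $g$ is bi-Lipschitz. This is where the hypothesis \eqref{LipBound} and, crucially, the \emph{strong} bounded distortion property enter: they are what guarantee that the lengths $|\varphi_w(U_j)|$ and $|S_w(U_j')|$ are comparable uniformly in $w$, so that the slopes of $g$ on the gaps stay in a fixed compact subinterval of $(0,\infty)$; without strong bounded distortion the slopes could degenerate and $g^{-1}$ would fail to be Lipschitz. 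I would make the comparability of gap lengths the technical heart of the argument and treat everything else as bookkeeping.
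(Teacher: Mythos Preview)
Your first and third steps (agreement on $\supp\mu$, endpoints, fixed points, and the Hutchinson identity) are correct and essentially identical to the paper's argument.

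The divergence is in the second step, and it is a genuine overcomplication that misses the key observation. You propose to bound $|\widetilde\varphi_i'|$ by first establishing that $g$ is bi-Lipschitz (i.e.\ $q<|g'|<Q$), invoking the strong bounded distortion property to compare $|\varphi_w(U_j)|$ with $|S_w(U_j')|$. But at this point in the paper $g$ is \emph{not} yet known to be bi-Lipschitz --- that is exactly the content of Lemma~\ref{lemma7}, which in turn relies on Lemma~\ref{lemma6}, which uses the identity \eqref{DerivInt} derived in the proof of the present lemma. So your route either imports a not-yet-available fact or forces you to merge Lemmas~\ref{lemma6} and~\ref{lemma7} into this one.

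The paper's argument bypasses all of this. Since $\widetilde\varphi_i$ and $\varphi_i$ agree at the two endpoints of every gap $\varphi_w(U_j)$, and $\widetilde\varphi_i$ is affine on that gap, its slope there is simply
\[
|\widetilde\varphi_i'| \;=\; \frac{|\varphi_i(\varphi_w(U_j))|}{|\varphi_w(U_j)|} \;=\; \frac{1}{|\varphi_w(U_j)|}\int_{\varphi_w(U_j)} |\varphi_i'(t)|\,dt,
\]
an average of $|\varphi_i'|$. This immediately gives $\|\widetilde\varphi_i'\|_\infty \le \|\varphi_i'\|_\infty$ and, by the same reasoning applied to $\varphi_i^{-1}$, the analogous bound for $\widetilde\varphi_i^{-1}$. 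No bi-Lipschitz control of $g$ and no strong bounded distortion is needed here; the lemma holds for any conformal system satisfying \eqref{LipBound}. The averaging identity \eqref{DerivInt} is moreover reused in Lemma~\ref{lemma6}, so it is worth isolating.
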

\begin{proof}
Function $g$ is continuous and strictly monotonous by definition, so $g^{-1}$ is also continuous, and so is $\widetilde\varphi_i$ for every $i$.
Using \eqref{eq:5.1.a} we obtain for all $w\in\Sigma_*$
\[
\widetilde\varphi_w(0) = g^{-1}(S_w(g(0))) = \varphi_w(0), \quad 
\widetilde\varphi_w(1) = g^{-1}(S_w(g(1))) = \varphi_w(1),
\]
thus $\widetilde\varphi_i = \varphi_i$ on $\supp \mu$, since both functions are continuous and every point of $\supp \mu$ is a limit point of the set $\{ \varphi_w(0), w\in\Sigma_* \}$. Fixed point $x_w$ is also a limit point of the set $\{ \varphi_w(0), w\in\Sigma_* \}$, since $x_w = \lim\limits_{k\to\infty} \varphi_w^{[k]}(0)$.
Outside $\supp \mu$ functions $\widetilde\varphi_i$ are linear on every interval, moreover, for all $i=1,\ldots,m$, $j=1,\ldots,m-1$, $v,w\in\Sigma_*$ we have
\begin{equation}\label{DerivInt}
|\widetilde\varphi'_v| \equiv \dfrac{|\varphi_v(\varphi_w(U_j))|}
{|\varphi_w(U_j)|} =
\dfrac{1}{|\varphi_w(U_j)|} \int\limits_{\varphi_w(U_j)} |\varphi'_v(t)| dt
\quad \text{ on } \varphi_w(U_j),
\end{equation}
which gives us the relation
\[
\|\widetilde\varphi'_i\|_\infty \leq \|\varphi'_i\|_\infty, 
\]
thus $\widetilde\varphi_i \in W^1_\infty$. Similarly, $\widetilde\varphi^{-1}_i \in W^1_\infty$, thus $\widetilde\varphi_i$ is a $W^1_\infty$ diffeomorphism.

\end{proof}

\subsection{Smoothness}

We want to choose $S_i$ in such a way, that $g$ turns out to be $W^1_\infty$-diffeomorphism. Let's define
\begin{equation}\label{eq:5.2.a}
S_i(x) = c_i + \varphi'_i(x_i) x,
\end{equation}
where $c_i$ are arbitrary, but chosen in such a way, that $\mu$ and $\mu_0$ have the same structure as described at the beginning of Sect.~5.

\begin{lemma}\label{lemma6}
Let $(\varphi_1,\ldots,\varphi_m)$ satisfy the strong bounded distortion property. Then for every $w = (i_1,\ldots,i_k)\in\Sigma_*$ and every $x\in[0,1]$
\begin{equation}\label{StrongerBound}
C^{-1} \leq \left|  \dfrac{\widetilde\varphi'_w(x)}{\varphi'_{i_1}(x_{i_1})\cdot\ldots\cdot\varphi'_{i_k}(x_{i_k})}  \right| \leq C
\end{equation}
for some $C>0$.
\end{lemma}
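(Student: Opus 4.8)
The plan is to reduce the estimate \eqref{StrongerBound} for the deformed maps $\widetilde\varphi_w$ to the strong bounded distortion property for the original maps $\varphi_w$, and then to control the ratio between $|\varphi'_w|$ and the ``linearized'' product $|\varphi'_{i_1}(x_{i_1})\cdot\ldots\cdot\varphi'_{i_k}(x_{i_k})|$. First I would recall that $|S_w([0,1])| = |\varphi'_{i_1}(x_{i_1})|\cdot\ldots\cdot|\varphi'_{i_k}(x_{i_k})|$ by \eqref{eq:5.2.a} and the identity $|\varphi'_i(x_i)| = |I_i|$ established in the earlier remark, so the denominator in \eqref{StrongerBound} is exactly $|S_w([0,1])|$; hence the claim is equivalent to $C^{-1}\le |\widetilde\varphi'_w(x)|/|S_w([0,1])|\le C$ uniformly in $x$ and $w$.

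Next I would split according to whether $x$ lies in $\supp\mu$ or in one of the complementary intervals. On each interval $\varphi_v(U_j)$ the previous lemma gives the averaging formula \eqref{DerivInt}, so $|\widetilde\varphi'_w|$ on such an interval equals an average of $|\varphi'_w(t)|$ over that interval; thus it suffices to bound $|\varphi'_w(t)|/|S_w([0,1])|$ for $t\in\supp\mu$ (or for $t$ in the closure of the support, by continuity of $\varphi'_w$). Now apply the strong bounded distortion property: for any permutation $\sigma$ of the letters of $w$ we have $C^{-1}\le |\varphi'_w(t)|/|\varphi'_{\sigma w}(s)|\le C$. Averaging a fixed $\varphi'_w(t)$ against the appropriate choices of $\varphi'_{\sigma w}$ — or, more simply, using that by the chain rule $|\varphi'_w(t)| = \prod_{\ell} |\varphi'_{i_\ell}(\varphi_{i_{\ell+1}}\circ\cdots\circ\varphi_{i_k}(t))|$ and comparing each factor to $|\varphi'_{i_\ell}(x_{i_\ell})|$ — reduces everything to the per-letter estimate
\[
C^{-1} \le \left| \dfrac{\varphi'_{i_\ell}(y)}{\varphi'_{i_\ell}(x_{i_\ell})} \right| \le C,
\]
which for $C^{1+\gamma}$ maps is a consequence of ordinary bounded distortion applied to single-letter words together with the strong version that lets us move the argument to the fixed point. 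I would also invoke \eqref{eq:4.4}-type control on $|\varphi'_i - \varphi'_i(x_i)|$ if a sharper handle on the individual factors is needed.

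The main obstacle I anticipate is making the product-over-letters comparison uniform in the word length $k$: naively bounding each of the $k$ factors by a constant close to but not equal to $1$ would give a constant growing like $C^k$, which is useless. The resolution is precisely the \emph{strong} bounded distortion property (invariance of $|S_w|$ under permutation of letters), which lets one compare the whole product $|\varphi'_w(t)|$ to $|S_w([0,1])|$ in one step rather than letter by letter; concretely, $|\varphi'_w(t)|/|S_w([0,1])| = |\varphi'_w(t)|/|\varphi'_{\widehat w}(s)|$ for a suitable rearrangement $\widehat w$ and point $s$ at which the derivative is comparable to the product of fixed-point derivatives, and this single ratio is bounded by $C$ by hypothesis. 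Once this is in place, transferring the bound from $\varphi'_w$ on $\supp\mu$ to $\widetilde\varphi'_w$ on the complementary intervals via \eqref{DerivInt} is routine, since an average of quantities in $[C^{-1}a, Ca]$ again lies in $[C^{-1}a, Ca]$, and the case $x\in\supp\mu$ follows from $\widetilde\varphi_w = \varphi_w$ there together with a limiting argument for the derivative.
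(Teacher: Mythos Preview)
Your reduction from $\widetilde\varphi'_w$ to $\varphi'_w$ via the averaging formula \eqref{DerivInt} is correct and matches the paper exactly, and you have correctly identified the obstacle: a naive letter-by-letter comparison of $|\varphi'_w(t)|$ with $\prod_\ell |\varphi'_{i_\ell}(x_{i_\ell})|$ gives a constant $C^k$, which is useless.

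The gap is in your proposed resolution. You assert the existence of ``a suitable rearrangement $\widehat w$ and point $s$ at which the derivative is comparable to the product of fixed-point derivatives'', but you do not say what $\widehat w$ and $s$ are or why the comparison constant is uniform in $|w|$. The strong bounded distortion hypothesis only controls ratios $|\varphi'_w(x)/\varphi'_{\sigma w}(y)|$; it says nothing directly about $|\varphi'_{\sigma w}(y)|/\prod_\ell |\varphi'_{i_\ell}(x_{i_\ell})|$, so ``by hypothesis'' does not close this step. The paper's argument fills exactly this hole: take $\widehat w$ to be the \emph{sorted} word, so that $\varphi_{\widehat w} = \varphi_1^{[k_1]}\circ\cdots\circ\varphi_m^{[k_m]}$. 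Then by the chain rule the ratio in question splits into a product of only $m$ (not $k$) factors of the form $|(\varphi_i^{[k_i]})'(\xi_i)|/|\varphi'_i(x_i)|^{k_i}$, and each such factor is bounded uniformly in $k_i$ by the convergent product $\prod_{j\ge 0}(1\pm C\alpha_i^{j\gamma})$ coming from the $C^{1+\gamma}$ estimate \eqref{eq:4.4}. You do allude to \eqref{eq:4.4}, but the decisive point --- that sorting collapses $k$ factors into $m$ blocks, and that within each block the geometric convergence of iterates to the fixed point makes the infinite product converge --- is absent from your sketch.
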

\begin{proof}
From \eqref{DerivInt} we obtain, that since $\widetilde\varphi'_w$ is an average of $\varphi'_w$ on intervals outside $\supp\mu$, then for every $x\in[0,1]$ there exist some $x_1, x_2 \in [0,1]$, such that
\[
\varphi'_w(x_1) \leq \widetilde\varphi'_w(x) \leq \varphi'_w(x_2).
\]
Thus, it is sufficient to prove, that
\[
C^{-1} \leq \left|  \dfrac{\varphi'_w(x)}{\varphi'_{i_1}(x_{i_1})\cdot\ldots\cdot\varphi'_{i_k}(x_{i_k})}  \right| \leq C
\]
for some $C>0$, every $w = (i_1,\ldots,i_k)\in\Sigma_*$ and every $x\in[0,1]$. Using strong bounded distortion property we conclude, that without loss of generality, we could assume, that $i_1 \leq i_2 \leq \ldots \leq i_k$. Then
\[
\varphi_w = \varphi_1^{[k_1]} \circ \ldots \circ \varphi_m^{[k_m]}
\]
for certain values of $k_1, \ldots, k_m$. By the estimate \eqref{eq:4.4}, for every $x\in[0,1]$ we have
\[
\prod\limits_{j=0}^{k_i-1}(1 - C\alpha_i^{j\gamma}) \leq \left| \dfrac{(\varphi_i^{[k_i]})'(x)}{(\varphi_i(x_i))^{k_i}} \right|  \leq \prod\limits_{j=0}^{k_i-1}(1 + C\alpha_i^{j\gamma}).
\]
The products converge, since $\alpha_i < 1$, $\gamma>0$. Now it is sufficient to note, that
\[
\left|  \dfrac{\varphi'_w(x)}{\varphi'_{i_1}(x_{i_1})\cdot\ldots\cdot\varphi'_{i_k}(x_{i_k})}  \right| = \left| \dfrac{(\varphi_1^{[k_1]})'(\xi_1)}{(\varphi_1(x_1))^{k_1}} \right| \cdot\ldots\cdot\left| \dfrac{(\varphi_m^{[k_m]})'(\xi_m)}{(\varphi_m(x_m))^{k_m}} \right|,
\]
where $\xi_i := \varphi_{i+1}^{[k_{i+1}]}\circ\ldots\circ\varphi_m^{[k_m]}(x)$, is a product of $m$ bounded terms. 
\end{proof}

\begin{lemma}\label{lemma7}
Let self-conformal measure 
\[
\mu = \mu(\varphi_1, \ldots, \varphi_m; \rho_1, \ldots, \rho_m)
\]
satisfy the relation \eqref{LipBound} and the strong bounded distortion property. Then $\mu$ is a deformed self-similar measure, i.e. $\mu = \mu_0 \circ g$, where
\[
\mu_0 := \mu_0(S_1, \ldots, S_m; \rho_1, \ldots, \rho_m),
\]
$S_i$ are defined in \eqref{eq:5.2.a} and $g$ is a $W^1_\infty$ diffeomorphism.
\end{lemma}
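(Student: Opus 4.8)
The plan is to reduce everything to showing that the map $g$ constructed in Section 5.1 is a $W^1_\infty$ diffeomorphism, since the previous lemma already establishes that $\mu = \mu_0 \circ g$ with this $g$ and that $\widetilde\varphi_i = g^{-1}\circ S_i\circ g$ are $W^1_\infty$ diffeomorphisms realizing $\mu$. Because $g$ is continuous and strictly monotone, it suffices to produce two-sided bounds $q \le |g'(x)| \le Q$ for almost every $x$; the same argument applied to $g^{-1}$ (or the observation that the bounds are symmetric) then gives that $g^{-1}\in W^1_\infty$ as well.

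First I would fix a point $x$ outside $\supp\mu$, so $x$ lies in some deleted interval $\varphi_w(U_j)$ of maximal depth, and on that interval $g$ is affine by construction, sending $\varphi_w(U_j)$ onto $S_w(U_j^0)$ where $U_j^0$ is the corresponding gap of $\mu_0$. Hence $|g'(x)| = |S_w(U_j^0)| / |\varphi_w(U_j)|$. The numerator is $|I_{i_1}|\cdots|I_{i_k}|\cdot|U_j^0| = |\varphi'_{i_1}(x_{i_1})|\cdots|\varphi'_{i_k}(x_{i_k})|\cdot|U_j^0|$ using the identification $|I_i| = |\varphi'_i(x_i)|$ from the earlier remark. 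For the denominator, $|\varphi_w(U_j)| = \int_{U_j}|\varphi'_w(t)|\,dt$, and by Lemma~\ref{lemma6} (applied to $\varphi_w$ itself, which is the content behind \eqref{StrongerBound}) the integrand is comparable, uniformly in $w$, to the constant $|\varphi'_{i_1}(x_{i_1})|\cdots|\varphi'_{i_k}(x_{i_k})|$. Therefore $|\varphi_w(U_j)| \asymp |\varphi'_{i_1}(x_{i_1})|\cdots|\varphi'_{i_k}(x_{i_k})|\cdot|U_j|$ with constants independent of $w$ and $j$ (there are finitely many $j$). Dividing, the product $|\varphi'_{i_1}(x_{i_1})|\cdots|\varphi'_{i_k}(x_{i_k})|$ cancels and we are left with $|g'(x)| \asymp |U_j^0|/|U_j|$, a ratio of finitely many fixed positive numbers, hence bounded above and below by positive constants $Q$ and $q$ that do not depend on $x$.

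Then I would note that $\supp\mu$ has Lebesgue measure zero (Corollary after Lemma~\ref{lemma3}, using \eqref{LipBound}), so the almost-everywhere bound $q \le |g'| \le Q$ holds on all of $[0,1]$, which is exactly the statement that $g \in W^1_\infty$; and since $(g^{-1})' = 1/(g'\circ g^{-1})$ is then bounded between $1/Q$ and $1/q$ a.e., $g^{-1}\in W^1_\infty$ too, so $g$ is a $W^1_\infty$ diffeomorphism. Combined with the preceding lemma this gives $\mu = \mu_0\circ g$ with $\mu_0$ the self-similar measure generated by the $S_i$ of \eqref{eq:5.2.a} and $g$ a $W^1_\infty$ diffeomorphism, which is the assertion.

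The main obstacle is getting the comparability constant in the estimate $|\varphi_w(U_j)| \asymp |\varphi'_{i_1}(x_{i_1})|\cdots|\varphi'_{i_k}(x_{i_k})|\cdot|U_j|$ to be genuinely uniform over all words $w$ of all lengths — this is precisely where the strong bounded distortion property is indispensable, because ordinary bounded distortion bounds $\varphi'_w$ against itself at two points but not against the "diagonal" product $\prod|\varphi'_{i_j}(x_{i_j})|$, and it is the permutation-invariance of $|S_w([0,1])|$ that lets Lemma~\ref{lemma6} reorder the word and telescope the $C^{1+\gamma}$-estimate \eqref{eq:4.4} into convergent products. Once that uniform two-sided bound is in hand, the rest is bookkeeping: the potentially troublesome factors cancel and only a ratio of finitely many gap lengths survives.
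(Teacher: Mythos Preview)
Your argument is correct and is essentially the paper's own proof: both hinge on Lemma~\ref{lemma6} to cancel the product $\prod_{\ell}|\varphi'_{i_\ell}(x_{i_\ell})|$ against $|\varphi_w(U_j)|$ (respectively $|\widetilde\varphi'_w|$), leaving only the finitely many base-level gap ratios. The paper phrases this through the chain-rule identity $\widetilde\varphi'_w(x)=g'(x)\,|S_w([0,1])|/g'(\widetilde\varphi_w(x))$ to bound $|g'(x)/g'(y)|$, whereas you compute $|g'|$ directly as the length ratio $|S_w(U_j^0)|/|\varphi_w(U_j)|$ on each gap, but the mechanism is identical.
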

\begin{proof}
By definition
\begin{equation}\label{eq:5.2.b}
\widetilde\varphi'_w(x) = \dfrac{g'(x)}{g'(\widetilde\varphi_w(x))} |S_w([0,1])|
\end{equation}
far a.e. $x\in[0,1]$ and all $w\in\Sigma_*$. From \eqref{eq:5.2.a}
\[
|S_w([0,1])| = |\varphi'_{i_1}(x_{i_1})|\cdot\ldots\cdot|\varphi'_{i_k}(x_{i_k})|,
\]
thus
\[
\dfrac{g'(x)}{g'(\widetilde\varphi_w(x))} = \dfrac{\widetilde\varphi'_w(x)}{\varphi'_{i_1}(x_{i_1})\cdot\ldots\cdot\varphi'_{i_k}(x_{i_k})},
\]
and by Lemma \ref{lemma6} we have
\[
C^{-1} \leq \left| \dfrac{g'(x)}{g'(\widetilde\varphi_w(x))} \right| \leq C.
\]
Note, that for all $x,y\in U:=\bigcup\limits_{i=1}^{m-1} U_i$
\[
C_1^{-1} \leq \left| \dfrac{g'(x)}{g'(y)} \right| \leq C_1
\]
for some $C_1>0$, since $g'$ has only finite number of values inside $U$. Thus, since
\[
\left| \dfrac{g'(\widetilde\varphi_{w_1}(x))}{g'(\widetilde\varphi_{w_2}(y))} \right| = \left| \dfrac{g'(\widetilde\varphi_{w_1}(x))}{g'(x)} \right| \cdot 
\left| \dfrac{g'(y)}{g'(\widetilde\varphi_{w_2}(y))} \right| \cdot 
\left| \dfrac{g'(x)}{g'(y)} \right|,
\]
we conclude, that
\[
(C^2C_1)^{-1} \leq \left| \dfrac{g'(x)}{g'(y)} \right| \leq C^2C_1
\]
for all $x, y \in \bigcup\limits_{i=1}^{m-1} \bigcup\limits_{w\in\Sigma_*} \varphi_w(U_i) = [0, 1] \setminus \supp \mu$, and $g$ is a $W^1_\infty$ diffeomorphism.
\end{proof}

\begin{theorem}
Let self-conformal measure 
\[
\mu = \mu(\varphi_1, \ldots, \varphi_m; \rho_1, \ldots, \rho_m)
\]
satisfy the relation \eqref{LipBound} and the strong bounded distortion property. Then
\[
N(\lambda, \mu) \asymp \lambda^D,
\]
where $D \in (0, \frac{1}{2})$ is the only solution of
\[
\sum\limits_{i=1}^m (\rho_i|\varphi'_i(x_i)|)^D = 1.
\]
\end{theorem}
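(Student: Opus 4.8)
The plan is to assemble the machinery of the preceding sections. By Lemma~\ref{lemma7}, the two hypotheses on $\mu$ --- the bound \eqref{LipBound} and the strong bounded distortion property --- guarantee that $\mu$ is a deformed self-similar measure: $\mu = \mu_0\circ g$, where $\mu_0 = \mu_0(S_1,\ldots,S_m;\rho_1,\ldots,\rho_m)$ with $S_i$ given by \eqref{eq:5.2.a}, and $g$ is a $W^1_\infty$ diffeomorphism. Since $S_i(x) = c_i + \varphi'_i(x_i)x$ maps $[0,1]$ onto a subsegment $I_i$ with $|I_i| = |\varphi'_i(x_i)|$, the contraction ratios of the underlying self-similar system are precisely the numbers $|\varphi'_i(x_i)|$.

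First I would note that the first theorem of \refpar{4} applies to $\mu = \mu_0\circ g$, because its proof uses only that $g$ is a $W^1_\infty$ diffeomorphism: it produces constants $q,Q>0$ with $q<|g'|<Q$ a.e.\ and then invokes the Courant--Fischer--Weyl min--max principle. This gives $N(\lambda,\mu)\asymp N(\lambda,\mu_0)$. Next I would invoke Proposition~\ref{propFuj} for the genuinely self-similar measure $\mu_0$: since the $S_i$ are affine contractions onto non-overlapping subsegments, $N(\lambda,\mu_0)\asymp\lambda^D$ with $D$ the unique solution of $\sum_{i=1}^m(\rho_i|I_i|)^D = 1$. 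Substituting $|I_i| = |\varphi'_i(x_i)|$ turns this into the asserted equation $\sum_{i=1}^m(\rho_i|\varphi'_i(x_i)|)^D = 1$, and chaining the two asymptotic equivalences yields $N(\lambda,\mu)\asymp\lambda^D$.

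That $D\in(0,\tfrac12)$ follows directly: by \eqref{LipBound} one has $\sum_i|\varphi'_i(x_i)|\le\sum_i\alpha_i<1$, so by Cauchy--Schwarz $\sum_i(\rho_i|\varphi'_i(x_i)|)^{1/2}\le(\sum_i\rho_i)^{1/2}(\sum_i|\varphi'_i(x_i)|)^{1/2}<1$; since $D\mapsto\sum_i(\rho_i|\varphi'_i(x_i)|)^D$ is strictly decreasing, equals $m>1$ at $D=0$, and is already below $1$ at $D=\tfrac12$, its unique root lies strictly between $0$ and $\tfrac12$.

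There is essentially no analytic obstacle left at this point: the work has all been front-loaded into Lemmas~\ref{lemma6} and \ref{lemma7} and the deformation construction. The one pitfall to avoid is appealing to the Corollary of \refpar{4} directly --- that corollary demands $g\in C^1$, whereas our $g$ is only $W^1_\infty$ (it is piecewise linear off $\supp\mu$, with no control of $g'$ on $\supp\mu$). The resolution, and the reason for the normalization \eqref{eq:5.2.a}, is that this choice builds the correct ratios $|I_i| = |\varphi'_i(x_i)|$ into $\mu_0$ by construction, so the characterization of $D$ survives without any differentiability of $g$ on the support.
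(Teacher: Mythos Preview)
Your proposal is correct and matches the paper's intended argument: the theorem is stated without proof precisely because it is meant to follow immediately by chaining Lemma~\ref{lemma7}, Theorem~1 of \refpar{4}, and Proposition~\ref{propFuj}, with $|I_i|=|\varphi'_i(x_i)|$ coming from the definition \eqref{eq:5.2.a}. Your observation that one must go through Theorem~1 rather than the Corollary (since $g$ is only $W^1_\infty$, not $C^1$) is exactly the point, and the added Cauchy--Schwarz verification that $D\in(0,\tfrac12)$ is a welcome detail the paper leaves implicit.
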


Every conformal iterated function system fulfils the bounded distortion property (see \cite[Lemma 2.1]{Pat}), but 
not every conformal iterated function system fulfils the strong bounded distortion property.

\begin{example}
Denote $w_1 = (1, 2, 1, 2)$ and $w_2 = (2, 1, 1, 2)$.
Consider $C^{1+\gamma}$ diffeomorphisms $\varphi_1$ and $\varphi_2$, such that for some $\varepsilon > 0$ we have
\begin{equation}\label{eq:ex}
\varphi'_{w_1}(x) < (1-\varepsilon)\varphi'_{w_2}(y)
\end{equation}
for every $x\in\varphi_{w_1}([0,1])$, $y\in\varphi_{w_2}([0,1])$. We could, for example, choose $\varphi_1(x) = ax$ for some $0<a<1$, and construct $\varphi_2(x)$ in such a way, that
\begin{align*}
&\sqrt{1-\varepsilon}\cdot \varphi'_2(y) > \varphi'_2(x), &\quad& \forall x\in\varphi_1([0,1]), \; y\in\varphi_2([0,1]), \\
&\sqrt{1-\varepsilon}\cdot \varphi'_2(y) > \varphi'_2(x), &\quad& \forall x\in\varphi_1\circ\varphi_2([0,1]), \; y\in\varphi_1\circ\varphi_1\circ\varphi_2([0,1]).
\end{align*}
From this condition \eqref{eq:ex} follows, since
\[
\varphi'_{w_1}(x) = a^2 \varphi'_2(\varphi_1(\varphi_2(x)))\varphi'_2(x) < 
a^2 (1-\varepsilon) \varphi'_2( \varphi_1(\varphi_1(\varphi_2(y))) ) \varphi'_2(y) = (1-\varepsilon)\varphi'_{w_2}(y)
\]
for every $x\in\varphi_{w_1}([0,1])$, $y\in\varphi_{w_2}([0,1])$. Note, that under this conditions
\[
\left| \dfrac{\left(\varphi_{w_1}^{[k]}\right)'(x)}{\left(\varphi_{w_2}^{[k]}\right)'(y)} \right| < (1-\varepsilon)^k
\]
for every $k\in\mathbb{N}$, $x\in\varphi_{w_1}([0,1])$, $y\in\varphi_{w_2}([0,1])$, which contradicts the strong bounded distortion property.

\end{example}

\bigskip
\bigskip

\begin{enbibliography}{99}
\addcontentsline{toc}{section}{References}

\bibitem{BS2} Birman~M.~Sh., Solomyak~M.~Z. \emph{Spectral theory of self-adjoint operators in Hilbert space} // 
Ed.2, Lan' publishers. --- 2010. (in Russian); English translation of the 1st ed: Mathematics and its Applications (Soviet Series). D. Reidel Publishing Co., Dordrecht, 1987.

\bibitem{K} Krein~M.~G. \emph{Determination of the density of the symmetric inhomogeneous string by spectrum}//
Dokl. Akad. Nauk SSSR --- 1951. --- V.~76, N.~3. --- P.~345-348. (in Russian)

\bibitem{BS1} Birman~M.~Sh., Solomyak~M.~Z. \emph{Asymptotic behavior of the spectrum of weakly polar integral operators}// Mathematics of the USSR-Izvestiya. --- 1970. --- V.~4, N.~5. --- P.~1151-1168.

\bibitem{KrKac} Krein~M.~G., Kac~I.~S. \emph{A discreteness criterion for the spectrum of a singular string}// Izvestiya Vuzov Matematika. --- 1958. --- N.~2. --- P.~136-153. (in Russian)

\bibitem{McKeanRay} McKean~H.~P., Ray~D.~B. \emph{Spectral distribution of a differential operator}// Duke Mathematical Journal. --- 1962. --- V.~29. --- N.~2. --- P.~281-292.

\bibitem{B} Borzov~V.~V. \emph{On the quantitative characteristics of singular measures}// Problems of math. physics. --- 1970. --- V.~4.
--- P.~42-47. (in Russian)

\bibitem{F} Fujita~T. \emph{A fractional dimention, self-similarity
and a generalized diffusion operator} // Taniguchi Symp. PMMP. Katata.
--- 1985. --- P.~83-90.

\bibitem{SV} Solomyak~M., Verbitsky~E. \emph{On a spectral problem related to
self-similar measures}
// Bull. London Math.~Soc. --- 1995. --- V.~27, N~3. ---
P.~242-248.

\bibitem{KL} Kigami~J., Lapidus~M.~L. \emph{Weyl’s problem for the spectral 
distributions of Laplacians on p.c.f. self-similar fractals}// Comm. Math. Phys. ---
1991. --- V.~158. --- P.~93–125.

\bibitem{Naz} Nazarov~A.~I. \emph{Logarithmic \(L_2\)-small ball asymptotics with respect to self-similar measure for some Gaussian processes}// Journal of Mathematical Sciences (New York). --- 2006. --- V.~133, N.~3. --- P.~1314-1327.

\bibitem{Uta} Freiberg~U.~R. \emph{A Survey on Measure Geometric Laplacians on Cantor Like Sets} // Arabian Journal for Science and Engineering. --- 2003. --- V.~28. --- N.~1C. --- P.~189-198.

\bibitem{VSh3} Vladimirov~A.~A., Sheipak~I.~A. \emph{On the Neumann Problem for the Sturm–Liouville Equation with Cantor-Type Self-Similar Weight}// Functional Analysis and Its Applications. --- 2013. --- V.~47, N.~4. --- P.~261-270.

\bibitem{V} Vladimirov~A.~A. \emph{Method of oscillation and spectral problem for four-order differential operator with self-similar weight}//
St. Petersburg Math. J. --- 2016. --- V.~27. --- N.~2. --- P.~237-244.

\bibitem{Rast} Rastegaev~N.~V. \emph{On spectral asymptotics of the Neumann problem for the Sturm–Liouville equation with self-similar generalized Cantor type weight}// J. Math. Sci. (N. Y.). --- 2015. --- V.~210. --- N.~6. --- P.~814-821.

\bibitem{Sh} Sheipak~I.~A. \emph{On the construction and some properties of self-similar functions in the spaces \(L_p[0,1]\)}//  Mathematical Notes. --- 2007. ---
 V.~81, N.~6. --- P.~827–839.

\bibitem{H} Hutchinson~J.~E. \emph{Fractals and self similarity}//
Indiana Univ. Math. J. --- 1981. --- V.~30, N~5. --- P.~713-747.

\bibitem{Pat} Patzschke~N., \emph{Self-Conformal Multifractal Measures} // Advances in Applied Mathematics. --- 1997. --- V.~19. --- P.~486-513.

\end{enbibliography}

\end{document}